\def\mat#1{\ensuremath{#1}\xspace}
\def\makedef#1#2{\expandafter\gdef\csname #1\endcsname{#2}}
\def\makelet#1#2{\expandafter\let\csname #1\expandafter\endcsname\csname #2\endcsname}
\def\defmath#1{\makelet{temp@#1}{#1}\makedef{#1}{\mat{\csname temp@#1\endcsname}}}
\def\defbb#1{\makedef{c#1}{\mat{\mathbb{#1}}}}		
\def\defcal#1{\makedef{l#1}{\mat{\mathcal{#1}}}}	
\def\bbcal#1{\defbb{#1}\defcal{#1}}
\def\al{\mat{\alpha}}
\def\be{\mat{\beta}}
\def\ga{\mat{\gamma}}
\def\de{\mat{\delta}}
\def\hi{\mat{\chi}}
\def\la{\mat{\lambda}}
\def\Om{\mat{\Omega}}
\def\te{\mat{\theta}}
\def\mrm@#1{\mat{\mathrm{#1}}}
\def\deffrak#1{\makedef{g#1}{\mat{\mathfrak{#1}}}} 
\def\DMO{\DeclareMathOperator}
\DMO{\Hom}{Hom}
\DMO{\RHom}{RHom}
\DMO{\lHom}{\lH\mathit{om}}
\DMO{\Ext}{Ext}
\DMO{\lExt}{\lE\mathit{xt}}
\DMO{\End}{End}
\DMO{\Aut}{Aut}
\DMO{\Fun}{Fun}
\DMO{\Tor}{Tor}
\DMO{\ext}{ext}
\DMO{\Ob}{Ob}
\DMO{\Mor}{Mor}
\DMO{\im}{im}
\DMO{\coim}{coim}
\DMO{\coker}{coker}
\DMO{\Arr}{Arr}
\DMO{\Id}{Id}
\DMO{\id}{id}
\DMO{\add}{add} 
\DMO{\ind}{ind} 
\DMO{\pro}{pro} 
\DMO{\Map}{Map} %
\DMO{\Iso}{Iso} %
\DMO{\Isom}{Isom}%
\DMO{\Ind}{Ind}
\DMO{\Bun}{Bun}
\DMO{\Higgs}{Higgs}
\DMO{\Hitch}{Hitch}
\DMO{\Loc}{Loc}
\DMO{\Maps}{Maps}
\DMO{\Presh}{Presh}
\DMO\coalg{Coalg}
\DMO{\Rep}{Rep}
\DMO{\Cor}{Cor}
\DMO{\Mod}{Mod}
\DMO{\rad}{rad}
\DMO{\soc}{soc}
\DMO{\ann}{ann}
\DMO{\pd}{pd}
\DMO{\Spec}{Spec}
\DMO{\Specm}{Specm}
\DMO{\Max}{Max}
\DMO{\spec}{Spec}
\DMO{\Proj}{Proj}
\DMO{\supp}{supp}
\DMO{\Coh}{Coh}
\DMO{\coh}{coh}
\DMO{\Qcoh}{QCoh}
\DMO{\QCoh}{QCoh}
\DMO{\Pic}{Pic}
\DMO{\Div}{Div}
\DMO{\ch}{ch}
\DMO{\Hilb}{Hilb}
\DMO{\Fitt}{Fitt}
\DMO{\Quot}{Quot}
\DMO{\Gras}{Gr}
\DMO{\Grass}{Gr}
\DMO{\Flag}{Flag}
\DMO{\Jac}{Jac}
\DMO{\cone}{cone}
\DMO{\Tw}{Tw}
\DMO{\rank}{rk}
\DMO{\rk}{rk}
\DMO{\codim}{codim}
\DMO{\cov}{cov}
\DMO{\sgn}{sgn}
\DMO{\td}{td}
\DMO{\GL}{GL}
\DMO{\PGL}{PGL}
\DMO{\SL}{SL}
\DMO{\SO}{SO}
\DMO\Der{Der}
\DMO\der{Der}
\DMO\coder{Coder}
\DMO{\diag}{diag}
\DMO{\HMod}{HMod} 
\DMO{\ad}{ad}
\DMO{\Ad}{Ad}
\DMO*{\colim}{colim}
\DMO*{\hocolim}{hocolim}
\DMO*{\holim}{holim}
\DMO{\Ho}{Ho}
\DMO{\har}{char}
\DMO{\sk}{sk}
\DMO{\cosk}{cosk}
\DMO{\Gal}{Gal}
\DMO{\tr}{tr}
\DMO{\Tr}{Tr}
\DMO{\Sh}{Sh}
\DMO{\Is}{Is} 
\DMO{\Hol}{Hol} 
\DMO{\Lie}{Lie} 
\DMO{\Res}{Res} 
\DMO{\irr}{irr} %
\DMO{\Irr}{Irr} %
\DMO{\Exp}{Exp} %
\DMO{\Log}{Log} %
\DMO{\Pow}{Pow}
\DMO{\pow}{pow}
\DMO{\mult}{mult} %
\DMO{\height}{ht} %
\DMO{\wt}{wt}
\DMO{\Vect}{Vect}
\DMO{\moda}{mod}
\DMO{\hd}{hd} 
\DMO{\face}{face}
\DMO{\Sym}{Sym}
\DMO{\Com}{Com} 
\DMO{\Eig}{Eig} 
\DMO{\cl}{cl} 
\DMO{\Li}{Li} 
\DMO{\Imxx}{Im} 
\DMO{\Rexx}{Re}
\def\n#1{\mat{\lvert#1\rvert}}
\def\iso{\simeq}
\def\tl#1{\mat{\tilde{#1}}}
\def\what#1{\mat{\widehat{#1}}}
\def\sb{\subset}
\def\xx{\times}
\def\ms{\backslash} 
\def\pser#1{[\![#1]\!]} 
\def\half{\mat{\frac12}}
\def\oh{\half} 
\def\inv{^{-1}}
\def\ang#1{\mat{\left\langle #1\right\rangle}}
\def\set#1{\mat{\{ #1\}}}
\def\sets#1#2{\mat{\{ #1 \mid #2\}}}
\def\emb{\hookrightarrow}
\def\mto{\mapsto}
\def\arr{\futurelet\test\arrtest}
\def\arrtest{\ifx^\test\let\next\arra\else\let\next\arrb\fi\next}
\def\arra^#1{\xrightarrow{#1}} \def\arrb{\to}
\def\arrowsD{
\def\mto{{\:\vrule height .9ex depth -.2ex width .04em\!\!\!\;\ar}}
\def\ar{{\:\vrule depth -.52ex height .60ex width 0.85em\;\!\!\rhla\,}}
\def\arr{\futurelet\test\arrtest}
\def\arrtest{\ifx^\test\let\next\arra\else\let\next\arrb\fi\next}
\def\arra^##1{\rTo^{##1}} \def\arrb{\ar}
\def\emb{\futurelet\test\embtest}
\def\embtest{\ifx^\test\let\next\emba\else\let\next\embb\fi\next}
\def\emba^##1{\rInto^{##1}} \def\embb{{\:\rthooka\!\!\!\ar}}
\newarrow{Eq}=====
\def\rrarr{\pile{\rTo\\ \rTo}}
\def\lrarr{\pile{\rTo\\ \lTo}}   
\newarrow{ShortTo}{}{}-->
}
\def\arrowsDStandard{
\newarrow{TeXto}----{->}
\newarrow{TeXinto}C---{->}
\newarrow{TeXonto}----{->>}
\newarrow{TeXdashto}{}{dash}{}{dash}{->}
\newarrow{Eq}=====
\def\ar{\rightarrow}
\def\emb{\futurelet\test\embtest}
\def\embtest{\ifx^\test\let\next\emba\else\let\next\embb\fi\next}
\def\emba^##1{\rTeXinto^{##1}} \def\embb{\hookrightarrow}
\def\rrarr{\pile{\rTo\\ \rTo}}
\def\lrarr{\pile{\rTo\\ \lTo}}   
}
\def\theorems{
\newcounter{nthr} 
\numberwithin{nthr}{section}
\let\theHnthr\thenthr
\newtheorem{thr}[nthr]{Theorem}
\newtheorem{prp}[nthr]{Proposition}

\newtheorem{crl}[nthr]{Corollary}

\newtheorem{conj}{Conjecture}
\newtheorem{rmr}[nthr]{Remark}
\theoremstyle{definition}
\newtheorem{dfn}[nthr]{Definition}
\newtheorem{exm}[nthr]{Example}

\theoremstyle{remark}
}
\newif\ifrem\remtrue
\def\lb#1{\mat{\underline{#1}}} 
\def\ub#1{\mat{\overline{#1}}}  
\def\over#1#2{\mat{\substack{#1\\#2}}} 
\def\lqq{\lq\lq}
\def\rqq{\rq\rq\xspace}
\def\ie{i.e.\ }
\def\eg{e.g.\ }
\def\cf{cf.\ }
\def\eprint#1#2{%
\expandafter\ifx\csname eprint@#1\endcsname\relax#1:#2%
\else\def\itemID{#2}\csname eprint@#1\endcsname\fi}
\def\defArchive#1#2{%
\makedef{eprint@#1}{#2}}
\begin{document}
\title[Motivic DT invariants]{Motivic Donaldson-Thomas invariants and Kac conjecture}
\author{Sergey Mozgovoy}%
\email{mozgovoy@maths.ox.ac.uk}%
\begin{abstract}
We derive some combinatorial consequences from the positivity of Donaldson-Thomas invariants for symmetric quivers conjectured by Kontsevich and Soibelman and proved recently by Efimov.
These results are used to prove the Kac conjecture for quivers having at least one loop at every vertex.
\end{abstract}
\maketitle
\tableofcontents

\section{Introduction}
The goal of this paper is to relate two quite different topics, the motivic Donaldson-Thomas invariants and the Kac positivity conjecture. Motivic Donaldson-Thomas invariants where introduced by Kontsevich and Soibelman \cite{kontsevich_stability} for ind-constructible $3$-Calabi-Yau categories endowed with some additional data. Most easily this machinery works for quivers with potentials (see also \cite{kontsevich_cohomological}). In this paper we will work with a toy example -- a quiver with a trivial potential. While the existence of a meaningful integration map from the Hall algebra of an ind-constructible $3$-Calabi-Yau category to the quantum torus which would be an algebra homomorphism is rather difficult to prove \cite{kontsevich_stability}, the existence of such integration map from the Hall algebra of a hereditary category (\eg a category of representations of a quiver) to the quantum torus is relatively easy and well-known \cite{reineke_counting}. For symmetric quivers, this allows us to define the Donaldson-Thomas invariants in a very explicit way. It follows from the conjecture of Kontsevich and Soibelman \cite[Conjecture 1]{kontsevich_cohomological} on the properties of the cohomological Hall algebra of a symmetric quiver, that the Donaldson-Thomas invariants are polynomials with non-negative coefficients. 
An interesting combinatorial interpretation of the Donaldson-Thomas invariants for the quiver with one vertex and several loops was given by Reineke \cite{reineke_degenerate}.
The full conjecture \cite[Conjecture 1] {kontsevich_cohomological} was recently proved by Efimov \cite{efimov_cohomological}.

Let us remind the Kac conjecture now. It was shown by Kac \cite{kac_root} that for any quiver $Q$ and for any dimension vector $\al\in\cN^{Q_0}$, there exists a polynomial $a_\al(q)$ with integer coefficients such that the number of absolutely indecomposable representations of $Q$ (\ie representations that remain indecomposable after any field extension) of dimension \al over a finite field $\cF_q$ equals $a_\al(q)$. It was conjectured by Kac that the polynomials $a_\al(q)$ have non-negative integer coefficients. This conjecture was proved in \cite{crawley-boevey_absolutely} for indivisible dimension vectors \al (\ie when the greatest common divisor of the coordinates of \al is $1$).

In this paper we will prove the Kac positivity conjecture for quivers with enough loops (\ie having at least one loop at every vertex) using the positivity of the Donaldson-Thomas invariants for symmetric quivers. Our proof is based on a thorough analysis of the Hua formula \cite{hua_counting} which allows an explicit computation of the polynomials $a_\al(q)$. We will consider the refinement of the Hua formula and show that the functions arising from this refinement are polynomials with non-negative coefficients (for quivers with enough loops). The refined positivity statement implies then the Kac conjecture. Interestingly enough, the refined positivity statement is not true for quivers that don't have enough loops (see Remark \ref{not enough}), although the Kac conjecture, as we believe, is. To prove the refined positivity statement we use the positivity of the Donaldson-Thomas invariants for symmetric quivers.

The paper is organized as follows. In Section \ref{prelim} we remind the definitions of the Hall algebra, quantum torus, integration map between them, and plethystic operations on the quantum torus. In Section \ref{DT} we define Donaldson-Thomas invariants for symmetric quivers and formulate some positivity conjectures related to them. In Section \ref{some pos} we formulate some positivity conjectures of combinatorial nature and prove that one of them is true if the conjecture on the positivity of DT invariants is true. In Section \ref{kac pos sec} we prove the Kac positivity conjecture for quivers with enough loops using the results from the previous sections.

This paper is an extended version of my talk at the workshop \lqq Representation Theory of Quivers and Finite Dimensional Algebras\rqq held in February 2011 at MFO, Oberwolfach. I would like to thank the organizers for the invitation.
I would like to thank Tamas Hausel, Markus Reineke, and Fernando Rodriguez-Villegas for many useful discussions. I would like to thank Bernhard Keller and Yan Soibelman for useful comments.

The author's research was supported by the EPSRC grant EP/G027110/1.

\def\qt{\what{\mathbb T}}
\section{Preliminaries}
\label{prelim}
\subsection{Hall algebra and quantum torus}
Let $Q=(Q_0,Q_1)$ be a quiver. Let $\hi$ be the corresponding Euler-Ringel form. Let
$$\ang{\al,\be}=\hi(\al,\be)-\hi(\be,\al),\qquad \al,\be\in\cZ^{Q_0}$$
be the anti-symmetric form of $Q$ and let $T(\al)=\hi(\al,\al)$ be the Tits form of $Q$. 

Let $H$ be the Hall algebra of $Q$ over a finite field $\cF_q$ (we use the conventions from \cite{kontsevich_stability} which give an opposite of the usual Ringel-Hall algebra). Its basis as a vector space consists of all isomorphism classes of representations of $Q$ over $\cF_q$. Multiplication is given by the rule
$$[N]\circ[M]=\sum_{[X]}F_{MN}^X[X],$$
where
$$F_{MN}^X=\#\sets{U\sb X}{U\iso N,\ X/U\iso M}.$$
The algebra $H$ is graded by the dimension of representations. Let $\what H$ be its completion with respect to this grading.

We define the quantum torus $\qt=\qt_Q$ as follows. As a vector space it is $$\cQ(q^\oh)\pser{x_1,\dots,x_r},$$
where $r=\#Q_0$ ($q$ will be either a power of prime number or a new variable, depending on the context). Multiplication is given by
$$x^{\al}\circ x^{\be}=(-q^\oh)^{\ang{\al,\be}}x^{\al+\be}.$$

\begin{prp}[\cf Reineke \cite{reineke_counting}]
The map
$$I:\what H\to \qt,\qquad [M]\mto \frac{(-q^\oh)^{T(\lb\dim M)}}{\#\Aut M}x^{\lb\dim M}$$
is an algebra homomorphism.
\end{prp}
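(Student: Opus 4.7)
The plan is to verify directly that $I([N]\circ[M]) = I([N])\circ I([M])$ by expanding both sides and matching coefficients of $x^{\underline{\dim} M+\underline{\dim} N}$. The key observation is that all the data attached to $\al,\be$ by the maps and product involves only the dimension vectors, so we can immediately isolate a single numerical identity that has to hold.

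First I would compare the powers of $-q^{1/2}$ appearing on each side. Using the polarization identity
$$T(\al+\be) = T(\al)+T(\be)+\hi(\al,\be)+\hi(\be,\al)$$
together with the defining relation $\ang{\al,\be}=\hi(\al,\be)-\hi(\be,\al)$, one gets, for $\al=\underline{\dim}N$ and $\be=\underline{\dim}M$, that $T(\al)+T(\be)+\ang{\al,\be}=T(\al+\be)-2\hi(\be,\al)$. So after factoring out $(-q^{1/2})^{T(\underline{\dim}X)}$ from every term (with $\underline{\dim}X=\al+\be$), the asserted equality of series reduces to the single numerical identity
$$\sum_{[X]} F_{MN}^X \,\frac{\#\Aut M\cdot\#\Aut N}{\#\Aut X} \;=\; q^{-\hi(\underline{\dim}M,\underline{\dim}N)},$$
where $[X]$ runs over isomorphism classes of representations with $\underline{\dim}X=\underline{\dim}M+\underline{\dim}N$.

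Next I would invoke the classical Riedtmann--Peng formula expressing Hall numbers in terms of extension groups:
$$F_{MN}^X \;=\; \frac{\n{\Ext^1(M,N)_X}}{\n{\Hom(M,N)}}\cdot\frac{\#\Aut X}{\#\Aut M\cdot\#\Aut N},$$
where $\Ext^1(M,N)_X\sbe\Ext^1(M,N)$ consists of those extension classes whose middle term is isomorphic to $X$. Substituting this into the displayed identity, the $\#\Aut$ factors cancel, and summation over $[X]$ collects all of $\Ext^1(M,N)$, giving $\n{\Ext^1(M,N)}/\n{\Hom(M,N)}$ on the left-hand side.

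Finally, since $Q$ is a quiver the category of $\cF_q$-representations is hereditary, so $\hi(\underline{\dim}M,\underline{\dim}N)=\dim_{\cF_q}\Hom(M,N)-\dim_{\cF_q}\Ext^1(M,N)$, and therefore $\n{\Ext^1(M,N)}/\n{\Hom(M,N)}=q^{-\hi(\underline{\dim}M,\underline{\dim}N)}$, which closes the argument. The only nontrivial input is the Riedtmann--Peng counting formula; everything else is bookkeeping with the Euler form. The potential pitfall to watch is a possible sign discrepancy coming from the paper's use of the opposite Hall product $[N]\circ[M]$, which simply swaps the roles of $M$ and $N$ in $\hi(\cdot,\cdot)$ above but does not affect the final identity, since $\hi(\be,\al)$ and $\hi(\al,\be)$ enter symmetrically via $T(\al+\be)$.
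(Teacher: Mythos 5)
Your proof is correct and is essentially the argument behind the result as the paper intends it: the paper gives no proof of this Proposition, citing Reineke, and the standard verification is exactly the reduction you carry out --- comparing exponents via $T(\al+\be)=T(\al)+T(\be)+\hi(\al,\be)+\hi(\be,\al)$, isolating the numerical identity $\sum_{[X]}F_{MN}^X\,\#\Aut M\cdot\#\Aut N/\#\Aut X=q^{-\hi(\lb\dim M,\lb\dim N)}$, and closing it with the Riedtmann--Peng formula plus heredity of $\Rep_{\cF_q}Q$. One small caution: your closing remark that the opposite-product convention ``does not affect the final identity'' is too glib, since the convention determines whether $\hi(\lb\dim M,\lb\dim N)$ or $\hi(\lb\dim N,\lb\dim M)$ appears and only the former matches $\Hom(M,N)$ and $\Ext^1(M,N)$ in Riedtmann--Peng --- but your actual computation tracks this correctly, so nothing is broken.
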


\subsection{Semistable representations}
Let $\te\in\cR^{Q_0}$. For any $\al\in\cN^{Q_0}\ms\set0$, we define
$$\mu_\te(\al)=\frac{\te\cdot\al}{\sum\al_i}.$$
For any $Q$-representation $M$, we define $\mu_\te(M)=\mu_\te(\lb\dim M)$, where $\lb\dim M=(\dim M_i)_{i\in Q_0}\in\cN^{Q_0}$ is the dimension vector of $M$. We say that a $Q$-representation $M$ is semistable (resp.\ stable) if for any $0\ne N\subsetneq M$, we have $\mu_\te(N)\le\mu_\te(M)$ (resp.\ $\mu_\te(N)<\mu_\te(M)$). For any
$\al\in\cN^{Q_0}$, we define
$$\tl A_\al^\te=\sum_{\over{M\text{ is }\te-sst}{\lb\dim M=\al}}[M]\in\what H,\qquad A_\al^\te x^\al=I(\tl A^\te_\al)\in\qt.$$
For any $\mu\in\cR$, we define
$$\tl A_\mu^\te=\sum_{\mu_\te(\al)=\mu}\tl A_\al^\te
=\sum_{M\text{ is }\te-sst}[M]\in\what H,\qquad
A_\mu^\te=I(\tl A_\mu^\te)=\sum_{\mu_\te(\al)=\mu}A_\al^\te x^\al\in\qt.$$
It was proved by Markus Reineke that $A_\al(q)$ are rational functions in the variable $q^\oh$ \cite{reineke_harder-narasimhan}.

\begin{rmr}
\label{expl A}
For $\te=0$, $\mu=0$, we denote $A_\mu^\te$ just by $A$. One can easily show
(see \eg \cite[Theorem 5.1]{mozgovoy_number}) that
\begin{equation}
A=\sum_\al\frac{(-q^\oh)^{-T(\al)}}{(q\inv)_\al}x^\al,
\label{eq:expl A}
\end{equation}
where $(q)_\al=\prod_i(q)_{\al_i}$ and $(q)_n=\prod_{k=1}^n(1-q^k)$ for $n\ge0$.
\end{rmr}

\subsection{Plethystic operations}
In this section we consider $\qt$ as an algebra endowed with the usual commutative multiplication. We consider $q^\oh$ as a new variable. 
For any function $f(q^\oh,x_1,\dots,x_r)$ in $\qt$, we define the Adams operations
\begin{equation}
\psi_n(f(q^\oh,x_1,\dots,x_r))=f(q^{\oh n},x_1^n,\dots,x_r^n),\qquad n\ge1.
\label{eq:psi}
\end{equation}

We define the plethystic exponential $\Exp:\qt^+\to 1+\qt^+$ (here $\qt^+$ is the maximal ideal of $\qt$) by the rule (see \cite{getzler_mixed} or \cite[Appendix]{mozgovoy_computational} for more details)
$$\Exp(f)=\exp\left(\sum_{n\ge1}\frac1n\psi_n(f)\right).$$
Its inverse, the plethystic logarithm $\Log:1+\qt^+\to \qt^+$, is given by
$$\Log(f)=\sum_{n\ge1}\frac{\mu(n)}n\psi_n(\log(f)),$$
where $\mu(n)$ is the M\"obius function.

\begin{rmr}
\label{expl A2}
Define the operator $T:\qt\to \qt$, $x^\al\mto (-q^\oh)^{T(\al)}x^\al$. Then we can rewrite equation \eqref{eq:expl A} as
\begin{equation}
A=\sum_\al\frac{(-q^\oh)^{-T(\al)}}{(q\inv)_\al}x^\al
=T\inv\left(\sum_\al\frac{x^\al}{(q\inv)_\al}\right)
=T\inv\Exp\left(\frac{\sum x_i}{1-q\inv}\right),
\label{eq:expl A2}
\end{equation}
where the last equation follows from the Heine formula \cite{mozgovoy_fermionic}
$$\sum_{n\ge0}\frac{x^n}{(q)_n}=\Exp\left(\frac{x}{1-q}\right).$$
\end{rmr}

\section{Donaldson-Thomas invariants}
\label{DT}
Assume that $Q$ is a symmetric quiver, \ie the anti-symmetric bilinear form $\ang{-,-}$ is zero. Then $\qt$ (with the twisted multiplication) is a commutative algebra. The following definition follows \cite[Definition 21]{kontsevich_cohomological}.

\begin{dfn}
\label{DT def}
For any $\mu\in\cR$, we define the Donaldson-Thomas invariants $\Om_\mu^\te=\sum_{\mu_\te(\al)=\mu}\Om_\al^\te x^\al\in\qt$ by the formula
$$A^\te_\mu=\Exp\left(\frac{\Om_\mu^\te}{q-1}\right).$$
For the trivial stability $\te=0$, we denote $\Om_\al^\te$ by $\Om_\al$.
\end{dfn}

\begin{rmr}
Using the above formula we can define the Donaldson-Thomas invariants $\Om_\mu^\te$ for an arbitrary quiver and a slope $\mu\in\cR$ such that $\ang{\al,\be}=0$ whenever $\mu_\te(\al)=\mu_\te(\be)=\mu$.
\end{rmr}

\begin{rmr}
The classical Donaldson-Thomas invariants 
$$\ub\Om^\te_\mu
=\sum_{\mu(\al)=\mu}\ub\Om^\te_\al x^\al\in\cQ\pser{x_i,i\in Q_0}$$
are defined by the formula
$$\lim_{q\to1}(q-1)\log A_\mu^\te=\sum_{\al}\ub\Om^\te_\al\Li_2(x^\al)
=\sum_{n\ge1}\frac1{n^2}\sum_{\al}\ub\Om^\te_\al x^{n\al},$$
where the dilogarithm function $\Li_2$ is defined by $\Li_2(x)=\sum_{n\ge1}\frac{x^n}{n^2}$. If we \lqq quantize\rqq this formula, we obtain
\begin{multline*}
A^\te_\mu
=\exp\left(\frac{1}{q-1}
\sum_{n\ge1}\frac 1n\frac{q-1}{q^n-1}\sum_\al\Om_\al^\te(q^n)x^{n\al}\right)\\
=\exp\left(\sum_{n\ge1}\frac 1n\psi_n\left(\frac1{q-1}\sum_\al\Om_\al^\te(q)x^{\al}\right)\right)
=\Exp\left(\frac1{q-1}\sum_\al\Om_\al^\te(q)x^{\al}\right).	
\end{multline*}
This coincides with Definition \ref{DT def}.
\end{rmr}

\begin{rmr}
A priori, the invariants $\Om_\al^\te$ are elements of $\cQ(q^\oh)$. 
\end{rmr}

The following statement is a consequence of \cite[Conjecture 1]{kontsevich_cohomological}

\begin{conj}
\label{KS conj}
For any $\al\in\cN^{Q_0}$, the functions $\Om_\al(-q^\oh)$ are polynomials in $q^{\pm\oh}$ with non-negative integer coefficients. 
\end{conj}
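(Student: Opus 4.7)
The plan is to deduce the statement directly from the structure theorem for the cohomological Hall algebra $\cH_Q$ of a symmetric quiver, \ie from \cite[Conjecture 1]{kontsevich_cohomological} as proved by Efimov. The task is therefore not to reprove that deep input but to make the translation between $\cH_Q$ and the generating function $A\in\qt$ of Remark \ref{expl A2} completely explicit.

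First, I would identify the bigraded Poincar\'e series of $\cH_Q$ (bigraded by dimension vector $\al\in\cN^{Q_0}$ and cohomological degree) with the series $A$ after the substitution $q^\oh\mto-q^\oh$. On one side, $A$ was computed in \eqref{eq:expl A2} from counting representations of $Q$ over $\cF_q$, which for a symmetric quiver admits an interpretation as a signed Poincar\'e count of the relevant equivariant cohomology ring; on the other side, $\cH_Q$ is by construction that equivariant cohomology of representation varieties. The sign substitution absorbs the parity of cohomological degrees.

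Second, I would invoke Efimov's theorem: there exists a $\cN^{Q_0}\times\cZ$-bigraded vector space $V=\bigoplus_\al V_\al$, with each $V_\al$ finite-dimensional in every cohomological degree, together with an isomorphism of bigraded supercommutative algebras
$$\cH_Q\iso\Sym\bigl(V\ts\cQ[u]\bigr),\qquad \deg u=(0,2).$$
Taking Poincar\'e series on both sides and applying the standard identity for the free supercommutative algebra on a graded vector space, one obtains a presentation of the series of $\cH_Q$ (equivalently, of $A$) as a plethystic exponential involving the Poincar\'e polynomials $P_\al(q^\oh)\in\cN[q^{\pm\oh}]$ of the $V_\al$. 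Uniqueness of the plethystic logarithm, applied to Definition \ref{DT def}, then forces an identity of the shape $\Om_\al(-q^\oh)=P_\al(q^\oh)$, which yields the claim since the right-hand side is the Poincar\'e polynomial of a finite-dimensional bigraded vector space and so has non-negative integer coefficients.

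The main obstacle, beyond Efimov's theorem itself, is the bookkeeping of signs and normalizations. One must simultaneously check that the substitution $q^\oh\mto-q^\oh$ reconciles the twisted multiplication of $\qt$ with the Koszul sign in $\Sym$; that the denominator arising from the $\cQ[u]$-factor matches $q-1$ of Definition \ref{DT def} up to an overall factor absorbed into $\Om$; and that the equivariant-cohomology identification of the first step is compatible with the algebraic structure of the second. None of this is conceptually deep, but each piece requires care, and it is precisely at this interface that the positivity of $\Om_\al(-q^\oh)$ becomes manifest.
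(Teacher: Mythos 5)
Your proposal is correct and follows essentially the same route as the paper, which states this as a consequence of \cite[Conjecture 1]{kontsevich_cohomological} (proved by Efimov) and, in the remark on the one-vertex case, sketches exactly the identification $P(x,q^\oh)=A(q^{-\oh})$ of the COHA Poincar\'e series with $A$ followed by reading off $\Om_\al$ from the free supercommutative presentation. The bookkeeping issues you flag (the sign substitution $q^\oh\mto -q^\oh$, the $\cQ[u]$-factor producing the $q-1$ denominator) are precisely the points handled in \cite[Sections 2.5--2.6]{kontsevich_cohomological}, so nothing is missing.
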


\begin{rmr}
One can show that if this conjecture is true then actually $\Om_\al(-q^\oh)\in\cN[q^\oh]$, \ie there are no negative powers of $q^\oh$ in $\Om_\al$.
\end{rmr}

\begin{rmr}
\label{on proofs}
A slightly weaker statement of Conjecture \ref{KS conj} for the quivers with one vertex and several loops was recently proved by Markus Reineke \cite{reineke_degenerate}.
The complete proof of \cite[Conjecture 1]{kontsevich_cohomological} and thus of Conjecture \ref{KS conj} was recently obtained by Efimov~\cite{efimov_cohomological}.
\end{rmr}

\begin{rmr}
Consider the quiver with one vertex and $g$ loops. The Hilbert-Poincar\'e series of its cohomological Hall algebra equals \cite[Section 2.5]{kontsevich_cohomological}
$$P(x,q^\oh)=\sum_{n\ge0}\frac{(-q^\oh)^{T(n)}}{(q)_n}x^n.$$
Comparing this with the formula in Remark \ref{expl A}, we see that
$$P(x,q^\oh)=A(q^{-\oh}).$$
By \cite[Theorem 3]{kontsevich_cohomological}
$$A(q^{-\oh})=P(x,q^\oh)
=\prod_{n\ge1}\prod_{m\in\cZ}(q^{m/2}x^n,q)_\infty^{\de(n,m)}=\Exp\left(\sum_{n\ge1}\sum_{m\in\cZ}\frac{-\de(n,m)q^{m/2}x^n}{1-q}\right)$$
for some integers $\de(n,m)$, where the q-Pochhammer symbols $(x,q)_\infty$ are defined by
$$(x,q)_\infty=\prod_{i\ge0}(1-q^ix)
=\Exp\left(-x\sum_{i\ge0}q^i\right)
=\Exp\left(\frac{-x}{1-q}\right).$$
This implies
$$\Om_n(q^\oh)=q\sum_{m\in\cZ}-\de(n,m)q^{-m/2}.$$
and therefore
$$\Om_n(-q^\oh)=q\sum_{m\in\cZ}(-1)^{m-1}\de(n,m)q^{-m/2}.$$
It is explained in \cite[Section 2.6]{kontsevich_cohomological} that 
\cite[Conjecture 1]{kontsevich_cohomological} implies
$$(-1)^{m-1}\de(n,m)\ge0$$
and therefore $\Om_n(-q^\oh)\in\cN[q^{\pm\oh}]$.
\end{rmr}

\begin{exm}
Let $Q$ be a quiver with one vertex and $g$ loops. Let $\Om^{(g)}(q^\oh)=\sum_{n\ge1}\Om^{(g)}_n(q^\oh)x^n$ be the corresponding Donaldson-Thomas invariants. Then
\begin{align*}
\Om^{(0)}(-q^\oh)&=q^{\oh}x.\\
\Om^{(1)}(-q^\oh)&=qx.\\
\Om^{(2)}(-q^\oh)&=q^{3/2} x+q^3x^2+q^{11/2}x^3+(q^7+q^9)x^4+\dots.\\
\Om^{(3)}(-q^\oh)&=q^2x+q^5x^2+(q^7+q^8+q^{10})x^3\\
&\phantom{=}+(q^9+q^{10}+2q^{11}+q^{12}+2q^{13}+q^{14}+q^{15}+q^{17})x^4+\dots.
\end{align*}
\end{exm}

Computer tests show that, more generally, we should expect

\begin{conj}
\label{DT conj}
For any stability parameter $\te\in \cR^{Q_0}$ and for any $\al\in\cN^{Q_0}$, the functions $\Om_\al^\te(-q^\oh)$ are polynomials in $q^{\oh}$ with non-negative integer coefficients. 
\end{conj}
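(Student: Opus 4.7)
The plan is to deduce Conjecture~\ref{DT conj} from Conjecture~\ref{KS conj} by showing that, for a symmetric quiver, the motivic DT invariants do not depend on the stability parameter at all, i.e.\ $\Om^\te_\al=\Om_\al$ for every $\te\in\cR^{Q_0}$ and every $\al\in\cN^{Q_0}$. Granting this, the positivity statement for arbitrary $\te$ is immediate from Efimov's theorem.

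The main input is the Harder--Narasimhan factorisation: every $Q$-representation $M$ admits a unique filtration whose subquotients $S_1,\dots,S_s$ are $\te$-semistable of strictly decreasing slopes, giving $[M]=[S_s]\circ\cdots\circ[S_1]$ in the Hall algebra. Summing over isomorphism classes of $M$ yields
\[
\tl A \;=\; \prod_\mu \tl A^\te_\mu \qquad \text{in } \what H,
\]
with the product taken in decreasing order of $\mu$; it is well defined in the dimension-wise completion because, for each fixed $\al$, only finitely many slope sequences $\mu_1>\dots>\mu_s$ contribute. Applying the algebra homomorphism $I\colon \what H\to\qt$ produces $A=\prod_\mu A^\te_\mu$ in $\qt$.

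Now, since $Q$ is symmetric, $\ang{-,-}\equiv 0$ and the twisted multiplication on $\qt$ coincides with the ordinary commutative one, so the factors above may be reordered freely. Substituting $A^\te_\mu=\Exp\bigl(\tfrac{\Om^\te_\mu}{q-1}\bigr)$ and using $\Exp(f)\cdot\Exp(g)=\Exp(f+g)$ gives
\[
\Exp\!\left(\sum_\al \frac{\Om^\te_\al\, x^\al}{q-1}\right) \;=\; A \;=\; \Exp\!\left(\sum_\al \frac{\Om_\al\, x^\al}{q-1}\right),
\]
the right-hand equality being the definition of $\Om_\al$ at trivial stability. Applying $\Log$ and comparing coefficients of $x^\al$ forces $\Om^\te_\al=\Om_\al$, and Conjecture~\ref{DT conj} reduces to Conjecture~\ref{KS conj}.

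The only point requiring a bit of care is the rigorous formulation of the infinite HN product in the completion of $\what H$ for arbitrary real $\te$, but this is routine given the finiteness noted above. Conceptually, the whole reduction is nothing more than the observation that the vanishing of $\ang{-,-}$ makes $\qt$ commutative, which in turn forces the plethystic exponential encoding of the DT invariants to be independent of the chosen stability; so the real work is entirely contained in Efimov's theorem.
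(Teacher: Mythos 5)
You should first be aware that the statement you are proving is labelled as a \emph{Conjecture} in the paper and is left open there: the only thing the paper establishes about $\Om^\te_\al$ for general $\te$ is the integrality $\Om^\te_\al\in\cZ[q^{\pm\oh}]$ (via the Mozgovoy--Reineke and Kontsevich--Soibelman theorems quoted after the conjecture). So there is no proof in the paper to compare against, and a correct argument here would be new content rather than a reconstruction.

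Your reduction itself is sound \emph{as far as it goes}: for a symmetric quiver the integration of the Harder--Narasimhan identity does give $A=\prod_\mu A^\te_\mu$ in the (now commutative) quantum torus, $\Exp$ turns this into $\sum_\al\Om^\te_\al x^\al=\sum_\al\Om_\al x^\al$, and hence $\Om^\te_\al=\Om_\al$. But this only yields the conjecture in a scope where it is essentially equivalent to Conjecture~\ref{KS conj}. The remark immediately preceding Conjecture~\ref{DT conj} extends the definition of $\Om^\te_\mu$ to an \emph{arbitrary} quiver $Q$ and any slope $\mu$ on which the antisymmetric form $\ang{-,-}$ vanishes (for a generic $\te$ this condition is automatic on each ray), and it is in this generality that the conjecture has genuine content beyond Efimov's theorem --- which is presumably why the author supports it by computer experiments rather than deriving it from Conjecture~\ref{KS conj}. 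In that setting your argument collapses at its central step: $\qt$ is no longer commutative, the HN factors $A^\te_\mu$ cannot be reordered or merged under a single $\Exp$, and the invariants $\Om^\te_\al$ genuinely depend on $\te$ (this is exactly the wall-crossing phenomenon). A smaller point: even in the symmetric case, Conjecture~\ref{KS conj} as stated gives $\Om_\al(-q^\oh)\in\cN[q^{\pm\oh}]$, i.e.\ Laurent polynomials, while Conjecture~\ref{DT conj} asserts membership in $\cN[q^\oh]$; the paper disposes of the negative powers in a separate remark, and your write-up should either invoke that remark or supply the argument rather than calling the conclusion ``immediate from Efimov's theorem.''
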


The fact that $\Om_\al^\te\in\cZ[q^{\pm\oh}]$ follows from \cite[Section 6.2]{kontsevich_cohomological}. Alternatively, we can argue as follows.

\begin{thr}[Mozgovoy-Reineke \cite{mozgovoy_number}]
For any quiver $Q$, we have
$$A_\mu\circ T\Exp\left(\frac{\sum_{\mu_\te(\al)=\mu}S_\al^\te(q) x^\al}{1-q}\right)=1$$
in $\qt$,
where the operator $T:\qt\to \qt$ is given by $x^\al\mto(-q^\oh)^{T(\al)}x^\al$ and $S_\al^\te\in\cZ[q]$ are polynomials counting absolutely \te-stable $Q$-representations of dimension \al over finite fields.
\end{thr}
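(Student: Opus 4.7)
The plan is a two-step argument: first, factor the Hall-algebra class $\tl A_\mu^\te$ according to Jordan--H\"older filtrations by $\te$-stable objects; then rewrite the resulting counts of stable representations in terms of counts of absolutely stable ones via a plethystic Galois-descent identity. Let $\cC_\mu^\te$ denote the full subcategory of $Q$-representations consisting of $\te$-semistables of slope $\mu$: its simples are precisely the $\te$-stable representations of slope $\mu$, and, being a full exact subcategory of a hereditary finite-length abelian category, $\cC_\mu^\te$ is itself hereditary and finite-length.

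Inside $\what H$, the first step enumerates Jordan--H\"older filtrations with fixed multiplicities of simple subquotients. Standard Hall-algebra manipulations, combined with heredity of $\cC_\mu^\te$ (so that the groupoid cardinalities of extension spaces behave predictably), should yield an identity that factors $\tl A_\mu^\te$ in $\what H$ as a formal product built from the classes $[L]$ of $\te$-stable objects of slope $\mu$. Applying the algebra homomorphism $I:\what H\to \qt$ transports this identity to the quantum torus, giving a relation between $A_\mu^\te$ and a generating series $\sum_\al M_\al(q) x^\al$, where $M_\al(q)$ counts $\te$-stable $Q$-representations of dimension $\al$ over $\cF_q$, weighted by inverse automorphism orders. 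The operator $T$ enters naturally from the $(-q^\oh)^{T(\al)}$ factor built into the definition of $I$.

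The second step is a plethystic identity relating $M_\al$ to the absolutely stable count $S_\al^\te$. A $\te$-stable $L$ over $\cF_q$ has $\End(L)=\cF_{q^d}$ for some $d$, and is obtained by restriction of scalars from an absolutely $\te$-stable representation over $\cF_{q^d}$; conversely, every absolutely $\te$-stable over $\cF_{q^n}$ descends to a stable over $\cF_q$. Reading this symmetric-product bookkeeping through the Adams operations $\psi_n$ that define $\Exp$ via \eqref{eq:psi} should give an identity of the form
$$\sum_{\mu_\te(\al)=\mu}\frac{M_\al(q)}{q-1}\,x^\al \;=\; \Exp\bigl(\sum_{\mu_\te(\al)=\mu}\frac{S_\al^\te(q)}{q-1}\,x^\al\bigr),$$
with the factor $q-1$ accounting for the $\cF_q^{\times}$-automorphisms of an absolutely stable object. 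Substituting into the outcome of the first step, and unraveling signs together with the $T$-twist, then yields $A_\mu^\te\circ T\Exp\bigl(\sum S_\al^\te x^\al/(1-q)\bigr)=1$ after moving one factor across.

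The main obstacle is the first step. In a non-semisimple finite-length abelian category, the Hall-algebra sum over objects does not factor cleanly over the simples without additional care, because non-trivial extensions between different stables contribute to the count of filtrations. One must exploit heredity together with an explicit groupoid-cardinality count of filtrations, or alternatively argue by a wall-crossing/Reineke-inversion argument along the slope-$\mu$ stratum, in order to arrive at a clean factorization. Once this is established, the plethystic identity of the second step is routine, and the assembly into the stated identity is a formal calculation in $\qt$.
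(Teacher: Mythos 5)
This theorem is not proved in the paper at all: it is imported verbatim from the reference \cite{mozgovoy_number}, so there is no internal proof to compare your argument against. Judged on its own terms, your proposal identifies the right two ingredients (the Jordan--H\"older structure of the finite-length category of $\te$-semistables of slope $\mu$, and Galois descent from stable to absolutely stable objects), but as written it has two genuine gaps, one of which you flag yourself and one of which you do not.

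The first gap is the one you acknowledge: the ``clean factorization'' of $\tl A_\mu^\te$ over the stable objects in $\what H$ is precisely the hard content of the theorem, and you leave it as a wish (``should yield'', ``one must exploit\dots or alternatively argue by\dots''). No such naive factorization exists; the whole point of the $T$-twist is to absorb the contribution of $\Ext^1$ between distinct stable constituents, and until that bookkeeping is actually carried out nothing has been proved. The second gap is that your Step~2 identity is false. Take the quiver with one vertex and no arrows with $\te=0$: the only stable object is the one-dimensional representation, so $M_1(q)=1/(q-1)$ and $M_\al=0$ otherwise, while $S_1=1$ and $S_\al=0$ otherwise. Your claimed identity
$$\sum_\al\frac{M_\al(q)}{q-1}\,x^\al=\Exp\Bigl(\sum_\al\frac{S_\al(q)}{q-1}\,x^\al\Bigr)$$
then asserts that a single monomial equals a series with nonzero constant term and nonzero coefficients in every degree (by the Heine formula $\Exp(x/(1-q))=\sum_n x^n/(q)_n$); even the charitable variant $\sum_\al M_\al x^\al=\Exp(\cdots)-1$ fails already in degree $2$. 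The stacky count of \emph{stables} is not a plethystic exponential of the absolutely stable counts; Galois descent only produces the inner Adams-operation structure (and in terms of counts of stables that are ``primitive'' over $\cF_{q^d}$, not of $S_\al$ directly). The correct $\Exp$-shaped statement relates the stacky count of \emph{all} semistables of slope $\mu$ to $\sum S_\al^\te x^\al/(1-q)$, and it holds only after the twist by $T$ --- that is, it \emph{is} the theorem. So the two steps of your outline cannot be separated in the way you propose: the extension data you defer in Step~1 is exactly what is needed to make any $\Exp$-identity in Step~2 true. For the actual argument you should consult \cite{mozgovoy_number}, where the identity is established by working in the Hall algebra of the finite-length subcategory of semistables and explicitly controlling automorphism and extension groups via heredity and the Euler form.
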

Over a symmetric quiver this result implies
\begin{equation}
\Exp\left(\frac{\Om_\mu^\te}{1-q}\right)
=T\Exp\left(\frac{\sum_{\mu_\te(\al)=\mu}S_\al^\te x^\al}{1-q}\right).
\end{equation}
Now the fact that $\Om_\al^\te\in\cZ[q^{\pm\oh}]$ follows from

\begin{thr}[Kontsevich-Soibelman {\cite[Theorem 9]{kontsevich_cohomological}}]
Let $B$ be an $r\xx r$ symmetric integer matrix and let $T:\qt\to \qt$ be an operator defined by $x^\al\mto (-q^\oh)^{\al^tB\al}x^\al$. Assume that
$$\Exp\left(\frac{\sum b_\al(q)x^\al}{1-q}\right)=T \Exp\left(\frac{\sum a_\al(q)x^\al}{1-q}\right),$$
where $a_\al\in\cZ[q^{\pm\oh}]$. Then $b_\al\in\cZ[q^{\pm\oh}]$.
\end{thr}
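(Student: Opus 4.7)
The plan is to take the plethystic $\Log$ of the defining identity and then establish integrality coefficient-by-coefficient. Writing
$$F := T\Exp\left(\frac{\sum a_\al x^\al}{1-q}\right),$$
the identity rearranges to $\frac{\sum b_\al x^\al}{1-q} = \Log(F)$, so it suffices to show that $(1-q)\Log(F)$ has coefficients in $\cZ[q^{\pm\oh}]$. The first step is to verify that $F$ itself lies in $\cZ[q^{\pm\oh}]\pser{x_1,\dots,x_r}$. By the Heine identity $\Exp\left(\frac{y}{1-q}\right) = \prod_{i\ge 0}(1-q^i y)^{-1}$ used in Remark~\ref{expl A2}, the hypothesis $a_\al \in \cZ[q^{\pm\oh}]$ implies that $\Exp\left(\sum a_\al x^\al / (1-q)\right)$ is an infinite product of $q$-Pochhammer-type factors with integer exponents, hence a series in $\cZ[q^{\pm\oh}]\pser{x_1,\dots,x_r}$. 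Applying $T$, which scales the $x^\al$-coefficient by $(-q^\oh)^{\al^t B\al} \in \cZ[q^{\pm\oh}]$, preserves this integrality.

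It then remains to prove that $(1-q)\Log(F)$ is integral. I would proceed by induction on $|\al| = \sum_i \al_i$. For the inductive step, extracting the $x^\al$-coefficient of $\Log(F) = \sum_n \frac{\mu(n)}{n}\psi_n(\log F)$ yields a $\cQ(q^\oh)$-linear expression in the $F_\be \in \cZ[q^{\pm\oh}]$ with $|\be| \le |\al|$, divided a priori by denominators of the form $n(1-q^n)$. The task reduces to proving that the cumulative denominator is in fact only $(1-q)$ and that the numerator, which equals $b_\al$, lies in $\cZ[q^{\pm\oh}]$.

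The heart of the argument, and the main obstacle, is the cancellation of the spurious denominators $(1-q^n)$ for $n\ge 2$. To control these I would use the factorisation
$$\Exp\left(\frac{g}{1-q}\right) = \prod_{i\ge 0}\Exp(q^i g),$$
which follows from $\sum_{i\ge 0}q^{in} = \frac{1}{1-q^n}$ and exposes the $(1-q)$-structure of $F$ as an infinite product of ordinary plethystic exponentials. Since $T$ is diagonal on each $x^\al$-monomial, its twists $(-q^\oh)^{\al^t B\al}$ enter the expansion only as explicit scalars; after the M\"obius inversion inherent in $\Log$, the Adams-twisted contributions are then expected to combine, via the same mechanism that underlies the classical integrality of Witt components in a lambda-ring, to cancel all $(1-q^n)$-denominators with $n\ge 2$, leaving only $(1-q)$ and an integer Laurent numerator $b_\al \in \cZ[q^{\pm\oh}]$.
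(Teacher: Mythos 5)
First, note that the paper does not prove this statement at all: it is quoted verbatim from Kontsevich--Soibelman \cite[Theorem 9]{kontsevich_cohomological}, so there is no internal proof to compare yours against; your attempt has to stand on its own, and it does not. Your preliminary step is already false: $F=T\Exp\bigl(\sum a_\al x^\al/(1-q)\bigr)$ does \emph{not} lie in $\cZ[q^{\pm\oh}]\pser{x_1,\dots,x_r}$. The Pochhammer factorization gives $\Exp\bigl(q^{m/2}x^\al/(1-q)\bigr)=\prod_{i\ge0}(1-q^{i+m/2}x^\al)^{-1}$, and already the coefficient of $x^\al$ in this single product is $q^{m/2}/(1-q)$; in general the coefficients of $F$ carry denominators of the form $(q)_\be$, exactly as in \eqref{eq:expl A}. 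So the series whose $\Log$ you propose to take is not integral to begin with, and the lambda-ring/Witt integrality principle you invoke (integrality of $\Log$ of an integral series) does not apply.

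Second, and more seriously, the entire content of the theorem is concentrated in the step you leave to an ``expected'' cancellation, and the mechanism you sketch cannot work because it never uses the hypothesis that $T$ is the twist by a \emph{quadratic form}. Without that hypothesis the statement is false. Take $r=1$, $a_1=1$, $a_n=0$ for $n\ge2$, and replace $T$ by the diagonal operator with $x\mto t_1x=x$ and $x^2\mto t_2x^2=qx^2$. A direct computation of the $x^2$-coefficient of $\Log(F)$ gives
$$b_2=\frac{t_2-1}{1-q^2}=\frac{-1}{1+q}\notin\cZ[q^{\pm\oh}].$$
For the quadratic twist $t_n=(-q^\oh)^{Bn^2}$ the same computation gives $b_2=\bigl(q^{2B}-\oh q^B(1+q)-\oh(-q)^B(1-q)\bigr)/(1-q^2)$, which is integral only because of the specific relations $\psi_2(t_1)=(-q)^B=\pm t_1^2$ and $t_2=t_1^4$. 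Any correct proof must exploit the identity $(\al+\be)^tB(\al+\be)=\al^tB\al+\be^tB\be+2\al^tB\be$, which is what produces $q$-binomial-type factors and hence the divisibility needed to clear the cyclotomic denominators; this is precisely what Kontsevich and Soibelman's argument (via their notion of admissible series) does, and it is absent from your proposal.
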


\section{Combinatorial positivity conjectures}
\label{some pos}
Let $C$ be an $r\xx r$ matrix with non-negative coefficients (not necessarily symmetric).
We define the operator $\ub T:\qt\to \qt$ by $\ub T(x^\al)=q^{\al^tC\al}x^\al$.
For any $\al\in\cN^r$, we define $\al!=\prod_{i=1}^n\al_i!$. Computer tests give evidence for the following two conjectures.

\begin{conj}
Assume that
$$\exp\left(\frac{\sum b_\al(q) \frac{x^\al}{\al!}}{q-1}\right)=
\ub T\exp\left(\frac{\sum a_\al(q) \frac{x^\al}{\al!}}{q-1}\right),$$
where $a_\al\in\cN[q]$, $\al\in\cN^r$. Then $b_\al\in\cN[q]$, $\al\in\cN^r$.
\end{conj}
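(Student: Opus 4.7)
The plan is to derive this conjecture from Conjecture~\ref{KS conj} (Efimov's positivity theorem for DT invariants of symmetric quivers), in two stages: first I would establish an ordinary-generating-function (OGF) analog of the conjecture, and then reduce the stated EGF form to that analog by a replica/limit argument.

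For the first stage, I would prove the companion OGF statement: if
\[
\Exp\!\Bigl(\tfrac{\sum b_\al(q)\,x^\al}{q-1}\Bigr) \;=\; \ub T\,\Exp\!\Bigl(\tfrac{\sum a_\al(q)\,x^\al}{q-1}\Bigr)
\]
with $a_\al\in\cN[q]$, then $b_\al\in\cN[q]$. Integrality $b_\al\in\cZ[q^{\pm\oh}]$ is immediate from the Kontsevich--Soibelman integrality theorem (the second boxed theorem in Section~\ref{DT}) applied to the symmetric integer matrix $B=C+C^t$, using that $q^{\al^tC\al}=(-q^\oh)^{\al^t(C+C^t)\al}$ since $\al^t(C+C^t)\al$ is always even. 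For positivity, the idea is that an arbitrary non-negative input $a_\al\in\cN[q]$ can be realized --- at least motivically --- as the absolutely $\te$-stable count $S^\te_\al$ of a symmetric quiver with suitable stability $\te$, so that the output $b_\al$ is exactly the DT invariant $\Om^\te_\al$ and inherits positivity from Conjecture~\ref{KS conj}.

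For the second stage I would use a replica trick. For each $N\ge 1$, introduce $rN$ variables $y_{(i,k)}$ ($1\le i\le r$, $1\le k\le N$), and pose the OGF problem with input $a^{(N)}_\be:=a_{\al(\be)}$ for $\be\in\{0,1\}^{rN}$ (with $\al(\be)_i:=\sum_k\be_{i,k}$) and $0$ otherwise, together with the twist matrix $C^{(N)}_{(i,k),(j,l)}:=C_{ij}$. A direct computation gives $\be^t C^{(N)}\be=\al(\be)^t C\al(\be)$, so Stage~1 supplies non-negative outputs $b^{(N)}_\be\in\cN[q]$. Now substitute $y_{(i,k)}\mapsto x_i/N$ and let $N\to\infty$: the OGF input tends to the desired EGF input, since $\sum_\be a^{(N)}_\be y^\be\big|_{y_{(i,k)}=x_i/N}=\sum_\al a_\al\prod_i\binom{N}{\al_i}(x_i/N)^{\al_i}\to\sum_\al a_\al\,x^\al/\al!$ via $\binom{N}{m}N^{-m}\to 1/m!$. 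The plethystic $\Exp$ degenerates to the ordinary $\exp$ because higher Adams operations $\psi_n$ ($n\ge 2$) introduce extra factors $N^{-(n-1)|\al|}\to 0$ relative to the $n=1$ term, and the twist with $C^{(N)}$ collapses to the twist with $C$ on the group-summed variable $\al$. The outputs $b^{(N)}_\be$, grouped by $\al(\be)$ and renormalized by the same $\al!/N^{|\al|}$ factor, therefore converge to $b_\al\in\cN[q]$.

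The hardest step will be Stage~1, specifically realizing an arbitrary $a_\al\in\cN[q]$ as $S^\te_\al$ (or another DT-positive quantity) for a symmetric quiver with stability. If that realization fails in full generality, the fallback is to skip Stage~1 and construct symmetric quivers $Q_N$ whose DT identities collapse in the $N\to\infty$ limit directly to the EGF identity of the conjecture; the technical issue there is then encoding the given $a_\al$ into the Hall algebra data of the $Q_N$ so that the limiting output matches $F$.
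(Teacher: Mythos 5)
First, a point of calibration: the paper does not prove this statement. It is stated as a conjecture, and the remark immediately following it records that it was proved by Reineke only for $r=1$ (private communication) and ``is still open for general $r$.'' The only positivity statement the paper actually proves is the neighbouring Conjecture~\ref{pos conj} (the plethystic-$\Exp$ version without the $\al!$ denominators), as Theorem~\ref{pos conj proof}. So there is no proof in the paper to compare yours against; you are attempting an open problem, and you should treat your own argument with corresponding suspicion.

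Your Stage~1 is exactly Conjecture~\ref{pos conj}, so you may simply invoke Theorem~\ref{pos conj proof}; but the route you sketch for it does not work. An arbitrary family $a_\al\in\cN[q]$ cannot be realized as the stable counts $S^\te_\al$ of a symmetric quiver (these are very rigid --- a single quiver determines all of them at once), and even if it could, the Mozgovoy--Reineke identity puts the twist on the wrong side: for a symmetric quiver with enough loops one has $T=\ub T\inv$, so the identity reads $\ub T\Exp\bigl(\tfrac{\Om_\mu^\te}{1-q}\bigr)=\Exp\bigl(\tfrac{\sum S_\al^\te x^\al}{1-q}\bigr)$, i.e.\ the DT invariants sit on the \emph{input} side, not the output side. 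The paper's actual mechanism is different: split each monomial $q^kx^\al$ of the input into a fresh variable to reduce to the input $\sum x_i$, which is then literally the series $A$ of a symmetric quiver with trivial stability, so that $b_\al=\Om_\al$ and Conjecture~\ref{KS conj} applies. Use that.

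Your Stage~2 (the replica limit) is the genuinely new content, and the mechanism is plausible: the combinatorics $\prod_i\binom{N}{\al_i}N^{-|\al|}\to 1/\al!$ and the suppression of $\psi_n$, $n\ge2$, by $N^{-(n-1)|\al|}$ both check out on small examples. But two gaps remain. (i) The substitution $y_{(i,k)}\mapsto x_i/N$ does not commute with $\Exp$, so you cannot simply ``pass to the limit in the identity'': you must prove, by induction on $|\al|$, that the normalized sums $\tfrac{\al!}{N^{|\al|}}\sum_{\al(\be)=\al}b^{(N)}_\be$ converge (with uniformly bounded $q$-degree) before you are entitled to discard the $\psi_{n\ge2}$ contributions on the $b$-side; as written this is asserted, not proved. (ii) More seriously, the limit of positive rational multiples of elements of $\cN[q]$ only yields polynomials with non-negative \emph{rational} coefficients. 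The conclusion $b_\al\in\cN[q]$ includes integrality, and nothing in your argument supplies it: the Kontsevich--Soibelman integrality theorem you cite concerns the plethystic $\Exp$-identity, not the ordinary $\exp$-identity with $\al!$ denominators. Until the convergence bookkeeping is written out and integrality is established by a separate argument, the proof is incomplete --- and given that the author explicitly records the statement as open, you should expect at least one of these points to be where the real difficulty lives.
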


\begin{rmr}
This conjecture was proved by Markus Reineke for $r=1$ (private communication). It is still open for general $r$.
\end{rmr}

\begin{conj}
\label{pos conj}
Assume that
$$\Exp\left(\frac{\sum b_\al(q) x^\al}{q-1}\right)=
\ub T\Exp\left(\frac{\sum a_\al(q) x^\al}{q-1}\right),$$
where $a_\al\in\cN[q]$, $\al\in\cN^r$. Then $b_\al\in\cN[q]$, $\al\in\cN^r$.
\end{conj}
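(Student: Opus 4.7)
The plan is to deduce Conjecture \ref{pos conj} from Efimov's positivity theorem for Donaldson--Thomas invariants of symmetric quivers (Conjecture \ref{KS conj}). The first step is a rewriting: set $B=C+C^t$, which is a symmetric nonnegative integer matrix. Then $\al^t B\al=2\al^t C\al$, and since $B_{ii}=2C_{ii}$ is even, the integer $\al^t B\al$ is even, giving $q^{\al^t C\al}=(-q^{\oh})^{\al^t B\al}$. Hence $\ub T$ is precisely the operator $T$ of the Kontsevich--Soibelman theorem \cite[Theorem 9]{kontsevich_cohomological}, and that theorem already delivers $b_\al\in\cZ[q^{\pm\oh}]$; the parity observation in the remark following Conjecture \ref{KS conj} further forces $b_\al\in\cZ[q]$. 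The real content of the conjecture is to promote this integrality to $\cN[q]$.

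For the positivity upgrade, I would try to realize the given data as coming from a symmetric quiver with a generic stability. Concretely, I would seek a symmetric quiver $\tl Q$ and a stability parameter $\te$ for which the absolutely $\te$-stable counts $S_\al^\te$ reproduce (a suitable refinement of) the given $a_\al(q)$. Once this realization is in place, the Mozgovoy--Reineke identity for $\tl Q$,
\begin{equation*}
T\Exp\left(\frac{\sum_{\mu_\te(\al)=\mu} S_\al^\te\, x^\al}{1-q}\right)=\Exp\left(\frac{\Om_\mu^\te}{1-q}\right),
\end{equation*}
rewritten via $1/(1-q)=-1/(q-1)$, matches the hypothesis of Conjecture \ref{pos conj} and identifies $b_\al$ (up to the parity adjustment above) with the DT invariants $\Om_\al^\te$ of $\tl Q$. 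Efimov's theorem then gives $\Om_\al^\te(-q^\oh)\in\cN[q^{\pm\oh}]$, and the evenness of $\al^t B\al$ converts this into $b_\al\in\cN[q]$.

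The hard part will be the realization step. Arbitrary $a_\al\in\cN[q]$ are, as $\cF_q$-point counts, disjoint unions of affine spaces, but a priori there is no reason they should arise as counts of absolutely stable representations of a symmetric quiver whose Tits form is compatible with the prescribed $C$. Two workarounds I would explore: (i) enlarge the quiver by adding framing vertices and extra loops, exploiting the fact that the twist $\ub T$ only constrains $C$ and not the full adjacency of $\tl Q$; and (ii) move to the cohomological Hall algebra setting of Kontsevich--Soibelman, where positivity flows from the existence of free generators of the CoHA rather than from a pointwise moduli realization, so that an abstract module over the CoHA with prescribed Hilbert--Poincar\'e data can play the role of ``$a_\al$''. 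A secondary bookkeeping difficulty is that $\ub T$ is linear but not a ring homomorphism, so positivity does not survive plethystic inversion automatically; signs across $1/(q-1)$ versus $1/(1-q)$ must be tracked carefully, especially if one tries to iterate the above argument after replacing $C$ by $-C$ via $\ub T^{-1}$.
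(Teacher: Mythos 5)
There is a genuine gap, and it sits exactly where you locate it yourself: the ``realization step.'' The paper's proof avoids that step entirely by a preliminary reduction that your proposal is missing. Given $a_\al(q)=\sum_k a_{\al,k}q^k$ with $a_{\al,k}\ge 0$, one introduces a new variable $y_{\al,k,l}$ ($1\le l\le a_{\al,k}$) for each monomial $q^kx^\al$ counted with multiplicity, so that under the substitution $y_{\al,k,l}=q^kx^\al$ the input becomes the standard linear form $\sum y_{\al,k,l}$, while the quadratic form $C$ pulls back to a quadratic form $C'$ on $\cZ^N$ whose matrix, with entries $C(\al,\al')=\al^tC\al'$, still has non-negative coefficients. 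This reduces the whole conjecture to the single case $\sum a_\al(q)x^\al=\sum_i x_i$. In that case no realization of arbitrary $a_\al$ as stable counts is needed: taking the symmetric quiver with Ringel matrix $B_{ij}=-C_{ij}-C_{ji}$, the series $\ub T\Exp\bigl(\sum x_i/(q-1)\bigr)$ is, up to the harmless rescaling $b_\al(q)=q^{-\n\al}\Om_\al(q)$ coming from $1-q\inv$ versus $q-1$, exactly the generating series $A$ of \emph{all} representations with \emph{trivial} stability, whose plethystic logarithm produces precisely the invariants $\Om_\al$ covered by Efimov's theorem.

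Your alternative route through a symmetric quiver with a generic stability $\te$ has a second defect beyond the unresolved realization: Efimov's theorem establishes positivity only for the trivial stability (Conjecture \ref{KS conj}); positivity of $\Om_\al^\te$ for general $\te$ is Conjecture \ref{DT conj}, which the paper presents as supported only by computer experiments. So even if the realization succeeded, the positivity input you invoke would not be available. Finally, your claim that evenness of $\al^tB\al$ converts $\cN[q^{\pm\oh}]$ into $\cN[q]$ only eliminates half-integral powers, not negative ones; the paper excludes negative powers by observing that the right-hand side lies in $\cQ[q]\left[\frac{1}{q^n-1}\mid n\ge1\right]\pser{x_1,\dots,x_r}$ (a ring preserved by $\ub T$ and $\Exp$) and that the intersection of $\cN[q^{\pm1}]$ with $\cQ[q]\left[\frac{1}{q^n-1}\mid n\ge1\right]$ is $\cN[q]$. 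This last step, or some substitute for it, is needed to finish even in your framework.
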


\begin{rmr}
In both conjectures it is important that the operator $\ub T$ comes from a quadratic form and not just from some function with non-negative values on $\cN^r$.
\end{rmr}

Assuming that Conjecture \ref{KS conj} is true (see Remark \ref{on proofs}), we can prove 
\begin{thr}
\label{pos conj proof}
Conjecture \ref{pos conj} is true.
\end{thr}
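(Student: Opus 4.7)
The plan is to realize the data of Conjecture~\ref{pos conj} as the projection of the Donaldson-Thomas data of a cleverly chosen symmetric quiver, so that Conjecture~\ref{KS conj} (Efimov) applies. Since $\al^{\rm t} C\al = \al^{\rm t} C^{\rm sym}\al$, we may assume $C$ is symmetric with nonnegative entries. Writing $a_\al(q)=\sum_k c_{\al,k} q^k$ with $c_{\al,k}\in\cN$, I define a symmetric quiver $\tilde Q$ whose vertex set consists of one vertex $v=(\al,k,j)$ for each triple with $c_{\al,k}>0$ and $j\in\{1,\dots,c_{\al,k}\}$ (writing $\al_v:=\al$ and $k_v:=k$), and whose adjacency matrix is
\[ N_{vv}=1+2\al_v^{\rm t} C\al_v, \qquad N_{vv'}=2\al_v^{\rm t} C\al_{v'} \quad (v\ne v'). \]
Then $N$ is symmetric with nonnegative integer entries, every vertex of $\tilde Q$ carries at least one loop, and a short computation gives $T_{\tilde Q}(\tilde\al) = |\tilde\al|^2 - \tilde\al^{\rm t} N\tilde\al = -2\pi(\tilde\al)^{\rm t} C\pi(\tilde\al)$, where $\pi(\tilde\al):=\sum_v\tilde\al_v\al_v\in\cN^r$; in particular $T_{\tilde Q}$ takes only even integer values.

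Next I define the substitution $\phi\colon\qt_{\tilde Q}\to\qt_Q$ by $\phi(y_v)=q^{k_v-1}x^{\al_v}$, where $y_v$ and $x_i$ are the variables of the respective quantum tori. One verifies that $\phi$ commutes with the Adams operations (hence with $\Exp$ and $\Log$); that $\phi$ sends the canonical Donaldson-Thomas source $q\sum_v y_v$ to $\sum_\al a_\al(q) x^\al$ exactly, since the number of $v$ with $\al_v=\al$ and $k_v=k$ equals $c_{\al,k}$; and, by the identity $(\tilde\al^{\rm t} N\tilde\al - |\tilde\al|^2)/2=\pi(\tilde\al)^{\rm t} C\pi(\tilde\al)$ built into $N$, that $\phi\circ\hat T_{\tilde Q}^{-1}=\ub T\circ\phi$, where $\hat T_{\tilde Q}$ is the operator $T_{\tilde Q}$ after the substitution $q^{1/2}\mapsto -q^{1/2}$ (which, by evenness of $T_{\tilde Q}$, is simply multiplication by $q^{-\pi(\tilde\al)^{\rm t} C\pi(\tilde\al)}$ on $y^{\tilde\al}$). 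Applying $\phi$ to the Donaldson-Thomas identity for $\tilde Q$ in its form
\[ \Exp\!\Bigl(\tfrac{\Omega^{\tilde Q}(-q^{1/2})}{q-1}\Bigr) = \hat T_{\tilde Q}^{-1}\Exp\!\Bigl(\tfrac{q\sum_v y_v}{q-1}\Bigr) \]
and comparing with the defining identity of $B$ in Conjecture~\ref{pos conj}, we obtain
\[ b_\al(q) = \sum_{\pi(\tilde\al)=\al} q^{d(\tilde\al)}\,\Omega^{\tilde Q}_{\tilde\al}(-q^{1/2}), \qquad d(\tilde\al):=\textstyle\sum_v\tilde\al_v(k_v-1). \]

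By Conjecture~\ref{KS conj} each $\Omega^{\tilde Q}_{\tilde\al}(-q^{1/2})$ lies in $\cN[q^{1/2}]$; and, because $T_{\tilde Q}$ is even, these invariants in fact lie in $\cN[q]$ (no half-integer powers of $q$). It remains to rule out the negative powers of $q$ that the factors $q^{d(\tilde\al)}$ can introduce when some $k_v=0$. Since $d(\tilde\al)\ge -|\tilde\al|$, it suffices to establish the $q$-valuation bound $v_q(\Omega^{\tilde Q}_{\tilde\al}(-q^{1/2}))\ge|\tilde\al|=\sum_v\tilde\al_v$ for our specific quiver $\tilde Q$. This is the main technical step, and I expect it to follow from the fact that every vertex of $\tilde Q$ carries a loop, paralleling Reineke's explicit computation of $\Omega^{(g)}_n$ for the single-vertex $g$-loop quiver recorded in the paper (whose minimal $q$-exponent is seen to be at least $n$ for $g\ge 1$); more conceptually, the bound should reflect a lower bound on the cohomological weight of the primitive part of the Kontsevich-Soibelman cohomological Hall algebra of $\tilde Q$. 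Granted this estimate, each summand $q^{d(\tilde\al)}\Omega^{\tilde Q}_{\tilde\al}(-q^{1/2})$ already lies in $\cN[q]$, so does their sum $b_\al$, and Conjecture~\ref{pos conj} follows.
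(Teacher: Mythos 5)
Your construction of the auxiliary symmetric quiver $\tilde Q$ and the substitution $\phi$ is a correct, somewhat compressed version of the reduction the paper itself performs: the paper first introduces one new variable $y_{\al,k,l}$ per monomial occurrence of $q^kx^\al$ in $\sum a_\al(q)x^\al$ (reducing to the case $\sum a_\al(q)x^\al=\sum x_i$), and then takes the symmetric quiver with Ringel matrix $B_{ij}=-C_{ij}-C_{ji}$, so that $\ub T=T\inv$ and the $b_\al$ become $q^{-\n\al}\Om_\al$. Up to your identity $b_\al(q)=\sum_{\pi(\tilde\al)=\al}q^{d(\tilde\al)}\Om^{\tilde Q}_{\tilde\al}(-q^{\oh})$ the argument is sound, and Conjecture \ref{KS conj} together with the evenness of $T_{\tilde Q}$ then puts $b_\al$ in $\cN[q^{\pm1}]$.

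The gap is the last step. To rule out negative powers of $q$ you reduce to the valuation bound $v_q\bigl(\Om^{\tilde Q}_{\tilde\al}(-q^\oh)\bigr)\ge\n{\tilde\al}$, and you do not prove it --- you only say you expect it to follow from every vertex of $\tilde Q$ carrying a loop, citing the one-vertex examples. That bound is precisely the remaining nontrivial content of the theorem (in the paper's notation it is the statement $q^{-\n\al}\Om_\al\in\cN[q]$), so as written the proof is incomplete; and the loops are a red herring, since the paper closes this gap by a ring-theoretic observation that has nothing to do with loops. Namely, $\sum b_\al x^\al$ is by definition $(q-1)\Log$ applied to $\ub T\Exp\bigl(\frac{\sum a_\al(q)x^\al}{q-1}\bigr)$, and both $\ub T$ (because $C$ has non-negative entries) and the plethystic operations preserve the subring $\cQ[q]\bigl[\frac1{q^n-1}\mid n\ge1\bigr]\pser{x_1,\dots,x_r}$, whose coefficients are rational functions regular at $q=0$; hence each $b_\al$ lies in $\cN[q^{\pm1}]\cap\cQ[q]\bigl[\frac1{q^n-1}\mid n\ge1\bigr]=\cN[q]$. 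This short argument would complete your proof essentially verbatim (applied to your $b_\al$ directly, and then summand by summand, since all summands lie in $\cN[q^{\pm1}]$ and so no cancellation of negative powers can occur).
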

\begin{proof}
We can assume that there are only a finite number of $\al\in\cN^r$ such that $a_\al(q)\ne0$.
Let $a_\al(q)=\sum_{k\ge0}a_{\al,k}q^k$, $\al\in\cN^r$.
For every monomial of the form $q^kx^\al$ in the expression $\sum_{\al>0}a_\al(q) x^\al$ we introduce a new variable $y_{\al,k,l}$, $1\le l\le a_{\al,k}$. Totally, there are $N=\sum a_\al(1)$ new variables.
Let $C(\al,\be)=\al^t C\be$ and $C(\al)=C(\al,\al)$.
If we substitute $y_{\al,k,i}=q^kx^\al$ then, for any choice of $\lb n=(n_{\al,k,l})_{\al,k,l}\in\cN^N$, we have
$$\ub T\left(\prod y_{\al,k,l}^{n_{\al,k,l}}\right)
=\ub T\left(\prod (q^kx^\al)^{n_{\al,k,l}}\right)
=q^{C(\sum n_{\al,k,l}\al)}\prod y_{\al,k,l}^{n_{\al,k,l}}.
$$
Note that
$$C\left(\sum n_{\al,k,l}\al\right)=\sum n_{\al,k,l}n_{\al',k',l'}C(\al,\al')$$
and this defines a quadratic form $C'$ on $\cZ^N$ given by a matrix with non-negative coefficients (of the form $C(\al,\al')$).
Assume that
$$\Exp\left(\frac{\sum b'_{\lb n}(q) y^{\lb n}}{q-1}\right)=
\ub T_{C'}\Exp\left(\frac{\sum y_{\al,k,l}}{q-1}\right),$$
where $\ub T_{C'}$ is given by $y^{\lb n}\mto q^{C'(\lb n)}y^{\lb n}$.
If we will show that $b'_{\lb n}\in\cN[q]$ for $\lb n\in\cN^N$, then this will
imply Conjecture \ref{pos conj} for general $\sum a_\al(q)x^\al$. Therefore, we can assume in the Conjecture \ref{pos conj} that $\sum a_\al(q)x^\al=\sum_{i=1}^r x_i$.

Define an $r\xx r$ symmetric matrix $B$ by
$$B_{ij}=-C_{ij}-C_{ji}.$$
Let $Q$ be a symmetric quiver such that $B$ is its Ringel matrix, \ie
$$B_{ij}=\de_{ij}-\#\set{\text{arrows from }i\text{ to }j}.$$
The corresponding Tits form is given by
$$T(\al)=\al^t B\al=-2(\al^t C\al).$$
Therefore
$$\ub T(x^\al)=q^{\al^t C\al}x^\al=q^{-\oh T(\al)}x^\al
=(-q^\oh)^{-T(\al)}x^\al=T\inv(x^\al),$$
where the operator $T:\qt\to \qt$ was defined in Remark \ref{expl A2}.
Recall from Remark \ref{expl A2} that
$$A=T\inv \Exp\left(\frac{\sum x_i}{1-q\inv}\right).$$
Conjecture \ref{KS conj} implies that the functions $\Om_\al(q)$ defined by
$$\Exp\left(\frac{\sum\Om_\al(q)x^\al}{q-1}\right)=A
=T\inv\Exp\left(\frac{\sum x_i}{1-q\inv}\right)$$
are elements of $\cN[q^{\pm1}]$. 
This implies that the functions $b_\al(q)$ defined by
$$\Exp\left(\frac{\sum b_\al(q)x^\al}{q-1}\right)
=T\inv\Exp\left(\frac{\sum x_i}{q-1}\right)
$$
are also elements of $\cN[q^{\pm1}]$ (indeed, $b_\al(q)=q^{-\n\al}\Om_\al(q)$, where $\n\al=\sum\al_i$).
We have to show that they are actually polynomials in $q$. The right hand side of the last equation is contained in the ring
$$\cQ[q]\left[\frac{1}{q^n-1}|n\ge1\right]\pser{x_1,\dots,x_r}$$
because the operator $T\inv=\ub T$ as well as the plethystic exponential preserve this ring. Therefore
$$\Om_\al\in \cN[q^{\pm1}]\cap \cQ[q]\left[\frac{1}{q^n-1}|n\ge1\right].$$
This intersection coincides with $\cN[q]$.
\end{proof}

\section{Kac positivity conjecture}
\label{kac pos sec}
In this section we will prove Kac positivity conjecture for quivers with enough loops (\ie having at least one loop at every vertex). Our proof will rely on Theorem~\ref{pos conj proof}.

Let $Q$ be an arbitrary quiver with $r$ vertices and let $\al\in\cN^{Q_0}$. It was proved by Kac that there exists a polynomial $a_\al\in\cZ[q]$, such that the number of absolutely stable representations of $Q$ of dimension \al over a finite field $\cF_q$ equals $a_\al(q)$. Kac conjectured that $a_\al\in\cN[q]$.

There exists a rather explicit formula for the polynomials $a_\al$ due to Hua \cite{hua_counting} (see also \cite{mozgovoy_computational} for its interpretation using \la-rings). Let us remind it.
Let \lP be the set of all partitions.
Given a multipartition 
$$\la=(\la^i)_{i\in Q_0}\in\lP^{Q_0},$$
where $\la^i=(\la^i_1,\la^i_2,\dots)$,
we define $\la_k=(\la^i_k)_{i\in Q_0}\in\cN^{Q_0}$, $k\ge1$.
Define the generating function
$$r(q)
=\sum_{\la\in\lP^{Q_0}}\prod_{k\ge1}
\frac{q^{-T(\la_k)}}{(q\inv)_{\la_k-\la_{k+1}}}x^{\la_k}\in\cQ(q)\pser{x_i,i\in Q_0},$$
where $(q)_\al,\al\in\cN^{Q_0,}$ was defined in Remark \ref{expl A}.
Then Hua's formula says that
$$r(q)=\Exp\left(\frac{\sum a_\al(q)x^\al}{q-1}\right).$$

We will introduce now certain generating function in the algebra 
$$A=\cQ(q)\pser{x_{ki}|k\ge1,1\le i\le r}$$
that generalizes $r(q)$.
For any $\al\in\cN^{Q_0}$, $k\ge1$, we will denote $\prod_{i\in Q_0}x_{ki}^{\al_i}$ by $x_k^\al$. Define
$$s(x_{11},\dots,x_{1r},x_{21},\dots,x_{2r},\dots)(q)=\sum_{\la\in\lP^{Q_0}}\prod_{k\ge1}
\frac{q^{-T(\la_k)}}{(q\inv)_{\la_k-\la_{k+1}}}x_k^{\la_k-\la_{k+1}}\in A.$$
The generating function $r(q)$ is obtained from $s(q)$ by substituting
$x_{ki}=x_i^k$.
The Kac positivity conjecture for quivers with enough loops follows from

\begin{thr}
\label{refined positivity}
Assume that the quiver $Q$ has enough loops (equivalently,
the matrix of the Euler-Ringel form has only non-positive components).
Then the coefficients of 
$$(q-1)\Log(s)$$
are polynomials in $q$ with non-negative coefficients.
\end{thr}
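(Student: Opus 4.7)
The plan is to rewrite $s$ in the form $\ub T\,\Exp\bigl((\text{positive})/(q-1)\bigr)$ and then apply Theorem~\ref{pos conj proof} directly. First, I would substitute $\mu_k = \la_k - \la_{k+1}$, so that $\lb\mu = (\mu_k)_{k\ge 1}$ ranges over finitely supported sequences in $\cN^{Q_0}$ and $\la_k = \sum_{j\ge k}\mu_j$. The monomial and Pochhammer factors $\prod_k x_k^{\mu_k}/(q^{-1})_{\mu_k}$ then separate completely across $k$, and the only thing coupling the $\mu_k$'s is the total $q$-exponent $-\sum_k T(\la_k)$. Expanding
$$T\bigl(\sum_{j\ge k}\mu_j\bigr) = \sum_{j\ge k} T(\mu_j) + \sum_{k\le j<j'} B(\mu_j,\mu_{j'}),$$
where $B(\al,\be) = T(\al+\be)-T(\al)-T(\be)$ is the polarization of $T$, and swapping the order of summation gives
$$\sum_{k\ge 1} T(\la_k) = \sum_{j\ge 1} j\,T(\mu_j) + \sum_{1\le j<j'} j\, B(\mu_j,\mu_{j'}) =: C(\lb\mu),$$
a quadratic form in the combined set of variables $(\mu_k^i)_{k,i}$.

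Second, I invoke the multivariable Heine identity $\sum_{\mu\in\cN^{Q_0}} x^\mu/(q^{-1})_\mu = \Exp\bigl(q\sum_i x_i/(q-1)\bigr)$, which is the content of Remark~\ref{expl A2} after stripping off the operator $T^{-1}$. Applying it separately at each $k$ and pulling the $q^{-C(\lb\mu)}$ prefactor out as an operator gives
$$s = \ub T\, \Exp\!\left(\frac{q\sum_{k,i} x_{k,i}}{q-1}\right),$$
where $\ub T$ is the $\cQ(q)$-linear twist $x^{\lb n}\mapsto q^{-C(\lb n)}x^{\lb n}$ on the polynomial algebra in the $x_{k,i}$'s.

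Third, the enough-loops hypothesis is exactly what makes $-C$ a non-negative quadratic form. Indeed, the coefficient of $\al_i^2$ in the Tits form $T$ is $1 - l_i \le 0$, where $l_i \ge 1$ is the number of loops at $i$, and every off-diagonal coefficient of $T$ is $\le 0$; hence $-T$, its polarization $-B$, and therefore $-C$ (a non-negative combination of the previous two) all have non-negative coefficients. This puts us in the hypothesis of Theorem~\ref{pos conj proof} with $a_{(k,i)}(q) = q \in \cN[q]$, so that theorem yields $b_{\lb n}(q)\in\cN[q]$ for the coefficients in $\Exp\bigl(\sum b_{\lb n}(q) x^{\lb n}/(q-1)\bigr) = s$. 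Since $(q-1)\Log(s) = \sum b_{\lb n}(q)\,x^{\lb n}$, this is the claim.

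I expect the main obstacle to be bookkeeping rather than conceptual: getting the telescoping identity for $\sum_k T(\la_k)$ exactly right and verifying term by term that the resulting $C$ is non-negative once the sign is flipped. A minor technical point is that $A$ has infinitely many variables whereas Theorem~\ref{pos conj proof} is stated for finitely many, but each coefficient of $\Log(s)$ depends on only finitely many of the $x_{k,i}$, so truncating to $k\le K$ reduces to the finite case. The enough-loops assumption is essential: without it, $T$ can have positive diagonal entries (e.g.\ for Dynkin quivers), $-C$ ceases to be non-negative, and the reduction breaks down, in agreement with Remark~\ref{not enough}.
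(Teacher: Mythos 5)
Your proposal is correct and follows essentially the same route as the paper's own proof: the same substitution $\ga_k=\la_k-\la_{k+1}$, the same reduction to finitely many variables, the same use of the Heine identity to write $s=\ub T\,\Exp\bigl(q\sum x_{ki}/(q-1)\bigr)$ with $\ub T$ a twist by a quadratic form whose matrix is non-negative exactly because of the enough-loops hypothesis, and the same final appeal to Theorem~\ref{pos conj proof}. Your explicit polarization computation of $\sum_k T(\la_k)$ just spells out the bookkeeping the paper leaves implicit.
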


\begin{proof}
It is enough to prove the theorem for 
$$s(x_{11},\dots,x_{nr})=s(x_{11},\dots,x_{nr},0,\dots)$$
for every $n\ge1$. If we define $\ga_k=\la_k-\la_{k+1}\in\cN^{Q_0}$, $k=1\dots n$, then we can rewrite
$$s(x_{11},\dots,x_{nr})
=\sum_{\ga\in(\cN^{Q_0})^n}q^{-\sum_{k=1}^nT(\sum_{l\ge k}\ga_l)}
\prod_{k=1}^n \frac{x_k^{\ga_k}}{(q\inv)_{\ga_k}}.$$
The power of $q$ in the last formula is some quadratic form on the vectors 
$$\ga=(\ga_{ki}|1\le k\le n, 1\le i\le r)\in\cZ^{rn}.$$
The $rn\xx rn$ matrix $C$ corresponding to this quadratic form has only non-negative components (this is where we use the condition on enough loops). Let $\ub T$ be the operator defined by the matrix $C$ as in Section \ref{some pos}. We define $x^\ga=\prod_{k,i}x_{ki}^{\ga_{ki}}$ and $(q)_\ga=\prod_{k,i}(q)_{\ga_{ki}}$.
Then
$$s(x_{11},\dots,x_{nr})
=\ub T\left(\sum_{\ga\in\cN^{rn}}\frac{x^\ga}{(q\inv)_{\ga}}\right)
=\ub T\Exp\left(\frac{\sum_{k,i}x_{ki}}{1-q\inv}\right).$$
Assume that
$$\Exp\left(\frac{\sum b_\ga(q)x^\ga}{q-1}\right)
=s(x_{11},\dots,x_{nr})
=\ub T\Exp\left(\frac{\sum_{k,i}x_{ki}}{1-q\inv}\right).$$
Then by Theorem \ref{pos conj proof} the elements $b_\ga(q)$ are in $\cN[q]$.
\end{proof}

\begin{crl}
The Kac conjecture is true for quivers having at least one loop at every vertex.
\end{crl}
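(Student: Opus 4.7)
The plan is to deduce the corollary directly from Theorem~\ref{refined positivity} by exploiting the relationship between the refined generating function $s$ and Hua's generating function $r(q)$. Recall that Hua's formula states
$$r(q)=\Exp\left(\frac{\sum_\al a_\al(q)x^\al}{q-1}\right),$$
so equivalently $(q-1)\Log(r(q))=\sum_\al a_\al(q)x^\al$. Thus the Kac conjecture amounts to showing that the coefficients of $(q-1)\Log(r(q))$ lie in $\cN[q]$.

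First I would observe that $r(q)$ is obtained from $s$ by the specialization $x_{ki}\mto x_i^k$, as stated just before Theorem~\ref{refined positivity}. The key technical point is that this specialization is a ring homomorphism that commutes with the Adams operations $\psi_n$ defined in \eqref{eq:psi}: the substitution sends $\psi_n(x_{ki})=x_{ki}^n$ to $(x_i^k)^n=x_i^{nk}$, which agrees with applying $\psi_n$ after substitution, and likewise for the $q$-variable. Since $\Exp$ and $\Log$ are built from the $\psi_n$, they also commute with this specialization. Therefore
$$(q-1)\Log(r(q))=\bigl((q-1)\Log(s)\bigr)\big|_{x_{ki}=x_i^k}.$$

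Next I would apply Theorem~\ref{refined positivity}: the coefficients of $(q-1)\Log(s)$ are polynomials in $q$ with non-negative coefficients. Under the substitution $x_{ki}\mto x_i^k$, the coefficient of $x^\al$ on the right-hand side becomes
$$a_\al(q)=\sum_{\ga}b_\ga(q),$$
where the sum runs over multi-indices $\ga=(\ga_{ki})$ satisfying $\sum_k k\,\ga_{ki}=\al_i$ for each $i\in Q_0$, and $b_\ga(q)\in\cN[q]$ are the coefficients produced by Theorem~\ref{refined positivity}. As each summand lies in $\cN[q]$, so does $a_\al(q)$.

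The only subtle step is verifying that the specialization commutes with $\Log$; this is essentially formal, but one must check that the infinite sum defining $s$ makes sense after substitution, i.e.\ that only finitely many terms contribute to the coefficient of any given $x^\al$. This is immediate, since a monomial $x^\ga$ with $\ga\ne0$ contributes only to $x^\be$ with $|\be|\ge|\ga|$, and there are only finitely many $\ga$ mapping into any fixed total degree. Hence the plan reduces the corollary to two ingredients already in place: Hua's formula and the refined positivity theorem.
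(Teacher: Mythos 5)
Your proposal is correct and follows exactly the route the paper intends: the corollary is deduced from Theorem~\ref{refined positivity} via the specialization $x_{ki}\mto x_i^k$, which turns $s$ into $r$ and (since it commutes with the Adams operations, hence with $\Log$) expresses each $a_\al(q)$ as a finite sum of the non-negative coefficients of $(q-1)\Log(s)$. The paper leaves this deduction implicit; you have filled in the details accurately.
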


\begin{rmr}
Refinements of the Hua formula where studied earlier in \cite{mozgovoy_fermionic} and \cite{villegas_refinement}. We learned the idea that $(q-1)\Log(s)$ could possibly have non-negative coefficients, in the case of a quiver with one vertex and several loops, from Fernando Rodriguez-Villegas \cite{villegas_refinement}.
\end{rmr}

\begin{rmr}
\label{not enough}
Our theorem is not true for quivers containing vertices without loops. For example,
for the quiver with one vertex and without loops, we have
$$s(x_{11})=\ub T\Exp\left(\frac{x_{11}}{1-q\inv}\right),$$
where $\ub T(x_{11}^n)=q^{-n^2}x_{11}^n$. Then
$$(q-1)\Log(s)=x_{11}-q\inv x_{11}^2+q^{-3}x_{11}^3-(q^{-6}+q^{-8})x_{11}^4+\dots$$
\end{rmr}
\enlargethispage{2\baselineskip}

\bibliography{../tex/papers}
\bibliographystyle{../tex/hamsplain}

\end{document}